\documentclass[11pt, reqno]{amsart}
\usepackage{amscd}        

\usepackage{float}

\usepackage{nomencl}

\usepackage{algorithm}
\usepackage{hyperref}

\usepackage{graphicx}
\usepackage{rotating}
\usepackage{amssymb}
\usepackage{epstopdf}
\usepackage{tikz}
\definecolor{mintgreen}{RGB}{152,255,152}
\definecolor{pinksalmon}{RGB}{255,102,102}
\definecolor{hueso}{RGB}{245,245,220}
\definecolor{marfil}{RGB}{255,253,208}
\definecolor{amarillo}{RGB}{255,255,0}
\definecolor{mygreen}{RGB}{0,186,123}
\usetikzlibrary{decorations.markings,arrows}
\usetikzlibrary{decorations.pathreplacing}
\DeclareGraphicsRule{.tif}{png}{.png}{`convert #1 `dirname #1`/`basename #1 .tif`.png}

\usepackage[inner=1.0in,outer=1.0in,bottom=1.0in, top=1.0in]{geometry}

\numberwithin{equation}{section}

\newtheorem{theorem}{Theorem}[section]
\newtheorem{lemma}[theorem]{Lemma}
\newtheorem{proposition}[theorem]{Proposition}
\newtheorem{corollary}[theorem]{Corollary}
\newtheorem{conjecture}[theorem]{Conjecture}


\makeatletter
\def\moverlay{\mathpalette\mov@rlay}
\def\mov@rlay#1#2{\leavevmode\vtop{%
   \baselineskip\z@skip \lineskiplimit-\maxdimen
   \ialign{\hfil$\m@th#1##$\hfil\cr#2\crcr}}}
\newcommand{\charfusion}[3][\mathord]{
    #1{\ifx#1\mathop\vphantom{#2}\fi
        \mathpalette\mov@rlay{#2\cr#3}
      }
    \ifx#1\mathop\expandafter\displaylimits\fi}
\makeatother


\newcommand{\suchthat}{\;\ifnum\currentgrouptype=16 \middle\fi|\;}

\newcommand{\Z}{\mathbb{Z}}
\newcommand{\C}{\mathbb{C}}
\newcommand{\Q}{\mathbb{Q}}
\newcommand{\R}{\mathbb{R}}

\newcommand{\op}[1]{\operatorname{#1}}

\newcommand{\Tr}[1]{\operatorname{Tr}#1}

\def\disc{{\textrm{disc}}}
\newcommand{\kro}[2]{\left( \tfrac{#1}{#2}\right )}
\newcommand{\tr}{\operatorname{tr}}
\newcommand{\adj}{\operatorname{adj}}

\theoremstyle{definition}
\newtheorem{remark}[theorem]{Remark}
\newtheorem{definition}[theorem]{Definition}

\newtheorem*{question}{Question}

\def \F { \mathbb{F}}
\def\Z{{\mathbb Z}}
\def\Q{{\mathbb Q}}
\def\H{{\mathbb H}}
\def \gal { \text{Gal}}

\begin{document}

\title{Theta series and number fields:  theorems and experiments}
\author{}

\author{Adrian Barquero-Sanchez, Guillermo Mantilla-Soler and Nathan C. Ryan}





\begin{abstract}
  We construct certain $\theta$-series associated to number fields and prove that for number fields of degree less than equal to 4, these $\theta$-series are number field invariants.  We also investigate whether or not the collection of $\theta$-series associated to number fields of the same degree and discriminant are linearly independent.  This is known to be true if the degree of the number field is less than or equal to 3.  We do not prove in this paper that they are linearly independent in general but we do give computational and heuristic evidence that we would expect them to be.
\end{abstract}

\maketitle

\section{Introduction and statement of results} 

Quadratic forms and number fields have been intertwined since the beginning of the development of algebraic number theory.  An important method to create modular forms is the construction of $\theta$-series associated to quadratic forms.  In this paper, we investigate the relationship between modular forms and number fields.  In \cite{zagier123}, a particular modular form, a linear combination of two $\theta$-series is shown to reveal a great deal of information about the quadratic forms of discriminant $-23$, the quadratic field $\Q(\sqrt{-23})$ and modular forms of level 23.  Perhaps the most extensively studied modular form is the Jacobi's $\theta$-function:  
\[
\vartheta(z) = \sum_{n=-\infty}^\infty q^{n^2}
\]
where $q=e^{\pi i z}$ for $z$ in the complex upper half-plane.  Let $d$ be a square free positive integer. A slight generalization of  Jacobi's form is the modular form $\displaystyle \vartheta_{d}(z) = \sum_{n=-\infty}^\infty q^{dn^2}$. Using the level, or by more elementary arguments, we see that the function $\displaystyle \vartheta_{d}(z)$ determines the value $d$. This could be interpreted in terms of number fields as giving a complete invariant for totally real number fields:  Let $K$ be a real quadratic number field and let $d_{K}$ be the unique square free positive integer such that $K =\Q(\sqrt{d_{K}})$. In this context the assignment \[K \mapsto \vartheta_{d_{K}}\] is a complete invariant. It is an invariant for $K$ since the map is independent of the isomorphism class of $K$, and it is complete since it is injective. This simple construction leads one naturally to consider the following:

\begin{question}
In what ways can one associate a modular form to a number field so that one can recover meaningful information about the number field?
\end{question}

More broadly, one of the main problems in algebraic number theory is to give a satisfactory way of deciding whether or not two number fields are isomorphic:
\begin{question}\label{LaPregunta}
Can one describe an isomorphism between number fields $K$, $L$ from an associated mathematical/arithmetical object? In other words, can we find a complete invariant for number fields?
\end{question}

One of the purposes of this paper is to generalize the construction $K \mapsto \vartheta_{d_{K}}$ from degree $n=2$ to arbitrary higher degree $n$, and to show, that  for $n$ up to $4$,  such generalization is a complete invariant.

Several natural objects have been at the center of study of the second question by several authors; e.g., the Dedekind zeta function, the ring of adeles, the group of Dirichlet characters, the absolute Galois group (see \cite{Corn1, Iwa, Komat, Neu, Perlis1, Uchida, Uchida1}). Among all of these, the only one that is a complete invariant, due to a famous result of Neukirch and Uchida, see \cite{Neu, Uchida, Uchida1}, is the absolute Galois group.  
All the  invariants mentioned above, ours included, are all refinements of the discriminant; i.e., having the same invariant for $K$ and $L$ implies that the two fields have the same discriminant. Since the discriminant of a number field $K$ is the determinant of the trace pairing, with respect any integral basis, it is natural to consider the isometry class of the integral bilinear pairing \begin{displaymath}
\begin{array}{cccc}
\tr_{K/\Q}: & O_{K} \times O_{K} &\rightarrow& \Z  \\  & (x,y) & \mapsto &
\tr_{K/\Q}(xy).
\end{array}
\end{displaymath}
 as the first natural object refining the discriminant. 
 
In \cite{GMS1} the second author associated a space of weight 1 modular forms to totally real cubic number fields.  In particular, he showed that there is an injection from the set of isomorphism classes of totally real cubic number fields of fundamental discriminant $d$ to the space of weight one modular forms of a prescribed level and character determined by $d$.  Moreover, he showed that the forms in the image of this map are linearly independent.  The modular forms that he creates are $\theta$-series derived from the map
 \begin{align*}
 t_K^0 : & \ O_K^0 \to \Z\\
 &x\mapsto \tfrac12\tr_{K/\Q}(x^2)
\end{align*}
where $K$ is a number field and $O_K^0\subset K$ is the set of integral elements with trace zero.  This is shown to be a quadratic form over $\Z$ and the associated modular form he constructs is the theta series of the quadratic form.  Combining other results of the second author \cite{GMS2, GMS3} one concludes that these modular forms are invariants for number fields in the sense described above. In this paper, we study the same question but in greater generality.  First for a degree $n$ totally real number field $K$, of fundamental discriminant co-prime to $n$, we define a theta series $\theta_{K}$ that generalizes the definition of  \cite{GMS1} to all $n$.

\begin{theorem}\label{LaDefi}
Let $K$ be a totally real number field of degree $n$ with discriminant $d_K$. Then if $d_K$ is a fundamental discriminant and $\gcd{(n, d_K)} = 1$, the theta series
\begin{align*}
\theta_K(z) := \sum_{\alpha \in \mathcal{O}_K^{0}} e^{\pi i \Tr_{K/\Q}{(\alpha^2)} z} \qquad (z \in \H),
\end{align*}
satisfies that
\begin{align*}
\theta_K \in M_{\frac{n - 1}{2}} \left( \Gamma_0(2nd_K), \left( \frac{\delta_{n} nd_{K}}{\cdot} \right) \right),
\end{align*}
where $\delta_n =\begin{cases}
 
(-1)^{\frac{n-1}{2}} &\text{if $n$ is odd,}\\
\frac{1}{2} &\text{if $n$ is even.}
\end{cases}$ 
\end{theorem}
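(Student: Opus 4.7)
The plan is to realize $\theta_K$ as the theta series of the positive-definite integral quadratic lattice $(\mathcal{O}_K^0, q_K)$, where $q_K(\alpha) := \operatorname{Tr}_{K/\Q}(\alpha^2)$, and then to invoke the classical Schoeneberg--Shimura transformation formula for theta series of integral quadratic forms. That formula requires three inputs: the rank of the lattice (giving the weight), the discriminant of a Gram matrix (appearing in the character), and the level of the form (giving $\Gamma_0(N)$).

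I would first verify that $q_K$ is positive-definite and integer-valued on the rank-$(n-1)$ lattice $\mathcal{O}_K^0$: positive-definiteness follows from $K$ being totally real, since $q_K(\alpha) = \sum_i \sigma_i(\alpha)^2 > 0$ for $\alpha \neq 0$, and integrality follows from $\alpha^2 \in \mathcal{O}_K$. Picking a $\Z$-basis $e_1, \ldots, e_{n-1}$ of $\mathcal{O}_K^0$ and setting $A = (\operatorname{Tr}(e_i e_j))_{ij}$, one rewrites
\[
\theta_K(z) = \sum_{x \in \Z^{n-1}} e^{\pi i (x^{T} A x) z},
\]
placing us in the standard framework for theta series of integral quadratic forms.

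The next step is to compute $\det A$. Using the orthogonal splitting $K = \Q \cdot 1 \perp K^0$ with respect to the trace pairing, together with the sublattice $\Z \cdot 1 \oplus \mathcal{O}_K^0 \subseteq \mathcal{O}_K$ of index $n/c$, where $c$ is the positive generator of $\operatorname{Tr}(\mathcal{O}_K) \subseteq \Z$, the multiplicativity of the discriminant yields $(n/c)^2 d_K = n \cdot \det A$. The coprimality hypothesis $\gcd(n, d_K) = 1$ then forces $c = 1$: one checks that $1/c$ lies in the codifferent $\mathcal{D}_K^{-1}$ (since $\operatorname{Tr}((1/c)\mathcal{O}_K) = (1/c) \cdot c\Z = \Z$) and generates a cyclic subgroup of order exactly $c$ in $\mathcal{D}_K^{-1}/\mathcal{O}_K$, which has order $|d_K|$, so $c$ divides both $n$ and $d_K$. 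Hence $\det A = n d_K$. For the level of $A$, I would identify the dual lattice $(\mathcal{O}_K^0)^{\vee}$ inside $K^0$ with the trace-zero part of an appropriately scaled codifferent (again using coprimality to decouple the $2$-primary and $d_K$-primary contributions), verify that $A$ is even, i.e.\ $q_K(\alpha) \in 2\Z$ on $\mathcal{O}_K^0$, and conclude that the smallest $N$ for which $N A^{-1}$ is integral with even diagonal is $N = 2 n d_K$.

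Finally, with rank $n-1$, $\det A = n d_K$, and level $2 n d_K$ in hand, I would apply the classical transformation formula: Schoeneberg's theorem when $n$ is odd (even rank, integer weight $(n-1)/2$, character $\left(\frac{(-1)^{(n-1)/2} n d_K}{\cdot}\right)$), and Shimura's half-integral-weight theorem when $n$ is even (odd rank, weight $(n-1)/2$, character $\left(\frac{(n d_K)/2}{\cdot}\right)$); the two cases unify as $\left(\frac{\delta_n n d_K}{\cdot}\right)$ in the paper's notation. The hard part will be the precise level computation together with the evenness verification, both of which require fine local control of the trace pairing at $2$ and at primes dividing $d_K$; the coprimality hypothesis $\gcd(n, d_K) = 1$ is exactly what makes these two families of primes decouple, and the fundamental-discriminant hypothesis rules out pathologies that would otherwise shift the level, allowing the clean answers $\det A = n d_K$ and $N = 2 n d_K$ to emerge.
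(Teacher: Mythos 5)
Your overall strategy --- realize $\theta_K$ as the theta series of the even lattice $(\mathcal{O}_K^0,\ \alpha\mapsto\operatorname{Tr}_{K/\Q}(\alpha^2))$, compute its rank, determinant and level, and feed these into the Schoeneberg--Shimura transformation law --- is exactly the paper's. Your determinant computation (orthogonal splitting $\Q\cdot 1\perp K^0$, the index of $\Z\cdot 1\oplus\mathcal{O}_K^0$ in $\mathcal{O}_K$, and $\det A=nd_K$ once the trace is surjective) matches the paper's Lemma \ref{Discriminant}; your codifferent argument for $c=1$ is a legitimate variant of the paper's observation that $c^n\mid d_K$.

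There are, however, two places where your plan as written would not go through. First, the step you single out as ``the hard part'' --- proving that the \emph{exact} level of $A$ is $2nd_K$ --- is both unnecessary and false in general: the paper explicitly remarks that for $n=3$ with $d$ fundamental the level is $nd$, not $2nd$, so no amount of local analysis at $2$ and at $p\mid d_K$ will deliver $N=2nd_K$ on the nose. All that is needed is that the level \emph{divides} $2nd_K$, which is immediate from the general fact that the level of an even integral form divides twice its determinant (because $\det(A)\,A^{-1}=\pm\operatorname{adj}(A)$ is again even integral; this is the paper's Corollary \ref{TheLevel}); one then uses that a form of level $M$ lies in $M_{(n-1)/2}(\Gamma_0(N),\chi)$ for any multiple $N$ of $M$ divisible by $4$ when the rank is odd. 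Second, your justification of evenness is too quick: $\alpha^2\in\mathcal{O}_K$ only gives $\operatorname{Tr}_{K/\Q}(\alpha^2)\in\Z$, whereas you need $\tfrac12\operatorname{Tr}_{K/\Q}(\alpha^2)\in\Z$ (equivalently, that the exponents of $q=e^{2\pi i z}$ in $\theta_K$ are integers, so that the series is even a candidate for $\Gamma_0(N)$-modularity). This genuinely uses $\operatorname{Tr}_{K/\Q}(\alpha)=0$: one writes $\tfrac12\operatorname{Tr}_{K/\Q}(\alpha^2)=-\sum_{i<j}\sigma_i(\alpha)\sigma_j(\alpha)$, a rational algebraic integer, as in the paper's Proposition \ref{ElRango}. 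With these two repairs your argument coincides with the paper's proof.
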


\begin{remark}
By convention we define $\theta_{\Q}(z)$ to be the constant function $1$.
\end{remark}

Generalizing the results of  \cite{GMS1} from degree $3$ to degree $4$ we show that there is an injection from the set of isomorphic classes of totally real quartic number fields of square free discriminant $d$ to a prescribed space of weight $3/2$ modular forms.  

\begin{theorem}\label{MainTheorem}
Let $K$ and $L$ be totally real number fields of degree $n \leq 4$ with discriminants $d_K = d_L = d$, where $d$ is a fundamental discriminant. Then if $\gcd{(n, d)} = 1$, we have
\begin{align*}
K \cong L \iff \theta_K = \theta_L,
\end{align*}
i.e., the theta series $\theta_K$ completely determines the isomorphism class of the number field $K$.
\end{theorem}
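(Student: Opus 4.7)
The direction $K \cong L \Rightarrow \theta_K = \theta_L$ is immediate: any isomorphism $\varphi : K \to L$ restricts to a ring isomorphism $\mathcal{O}_K \to \mathcal{O}_L$ preserving trace, hence to a bijection $\mathcal{O}_K^0 \to \mathcal{O}_L^0$ preserving $\Tr(\alpha^2)$, so the two defining sums for $\theta_K$ and $\theta_L$ agree term by term.

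For the converse, the plan is to factor the implication through the isometry class of the integral trace-zero lattice $(\mathcal{O}_K^0, t_K^0)$, where $t_K^0(\alpha) := \tfrac{1}{2}\Tr_{K/\Q}(\alpha^2)$:
\[
\theta_K = \theta_L \ \Longrightarrow\ (\mathcal{O}_K^0, t_K^0) \cong (\mathcal{O}_L^0, t_L^0) \ \Longrightarrow\ K \cong L.
\]
The second implication is supplied by the results of Mantilla-Soler in \cite{GMS2, GMS3}, which establish the integral trace-zero form as a complete isomorphism invariant of totally real number fields of degree $n \leq 4$ with fundamental discriminant coprime to $n$. Thus the real content is the first implication.

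That first implication is the assertion that the theta series of $(\mathcal{O}_K^0, t_K^0)$ determines the lattice up to isometry. Reading off Fourier coefficients, it amounts to the claim that the representation numbers $r_K(m) := \#\{\alpha \in \mathcal{O}_K^0 : \Tr(\alpha^2) = m\}$ pin down the isometry class. I would handle the cases in turn: $n = 1$ is vacuous; $n = 2$ is a unary calculation where the smallest $m$ with $r_K(m) \neq 0$ already recovers $d_K$ and hence $K$; $n = 3$ is the binary case, already contained in the injectivity statement of \cite{GMS1}. The essentially new case is $n = 4$, where the lattice is ternary.

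The main obstacle is precisely this ternary case, since in general ternary positive-definite integral forms are not rigid under theta equivalence---two non-isometric forms in the same genus can share all representation numbers. The strategy is to exploit the arithmetic hypotheses (fundamental discriminant $d$ coprime to $4$) to control the local invariants and the discriminant of $t_K^0$, and then to argue either that the genus of $t_K^0$ consists of a single isometry class (via a mass-formula computation at the primes dividing $2d$), or that no other ternary form with the same theta series can arise as a trace-zero form of a totally real quartic field satisfying the hypotheses. Once the isometry of the trace-zero lattices is secured, Theorem \ref{MainTheorem} reduces to the cited invariance result.
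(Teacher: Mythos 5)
Your overall architecture matches the paper's: the forward direction via trace preservation, and the converse factored as $\theta_K = \theta_L \Rightarrow (\mathcal{O}_K^0, \Phi_K) \cong (\mathcal{O}_L^0, \Phi_L) \Rightarrow K \cong L$, with the last arrow supplied by the cited results on the integral trace-zero form \cite{GMS3}. The cases $n \leq 3$ are also disposed of essentially as in the paper. The problem is the ternary step for $n = 4$, which is exactly where you stop proving and start hoping.

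The gap is this: you assert that positive definite integral ternary forms are not determined by their representation numbers, and on that basis you propose either a one-class-genus argument via the mass formula or an unspecified argument that no ``theta-twin'' of $\Phi_K$ can arise as a trace-zero form. The premise is false in rank $3$: by Schiemann's theorem (\emph{Ternary positive definite quadratic forms are determined by their theta series}, Math.\ Ann.\ 308 (1997); cited in the paper as \cite[Theorem 4.4]{schiemann}), equality of all representation numbers of two positive definite ternary integral forms forces them to be isometric. Non-isometric forms sharing a theta series exist in rank $\geq 4$, not in rank $3$, and this rigidity is precisely the ingredient that closes the paper's argument with no local analysis at all. Your proposed substitutes would not rescue the proof: the form $\Phi_K$ has discriminant of size $4d$, so the mass of its genus, and hence its class number, grows with $d$ and is not $1$ in general --- indeed the paper's own data exhibit several non-isomorphic quartic fields of the same discriminant whose trace-zero forms are pairwise non-isometric ternary forms of equal discriminant, so a one-class genus is out of the question there; and the alternative ``no other ternary form with the same theta series can arise as a trace form'' is not an argument but a restatement of what must be proved. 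Replacing that paragraph with an appeal to Schiemann's theorem turns your outline into the paper's proof.
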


 At the moment  we cannot show, as in the cases $n \leq 3$, that $\theta$-series in the image are linearly independent but we give both computational evidence and heuristic evidence (see \S \ref{Heuristics} for why we expect them to be linearly independent).\\

\noindent
This paper is organized as follows.  First, we provide necessary background and prove some basic theorems about the objects we are studying.  Second, we prove the theorems stated above and we conclude with a summary of various computations and heuristics related to our expectation that the modular forms in the image of the map we define will be linearly independent.

\section*{Acknowledgments}

We would like to thank the organizers of the workshop Number Theory in the Americas for providing us the opportunity to start this collaboration and the Casa Matem\'atica Oaxaca (CMO-BIRS) for the hospitality.

\section{Some background on quadratic forms}\label{QuadraticBackground}

In this section we gather some basic background on quadratic forms and quadratic spaces over $\Z$ that will be needed later in the paper. We start by recalling the definition of a quadratic form on a free $\Z$-module.

\begin{definition}
A quadratic form on a free $\Z$-module $\Lambda$ is a function $\Phi: \Lambda \longrightarrow \Z$ such that
\begin{itemize}
\item[(i)] $\Phi(m \lambda) = m^2 \Phi(\lambda)$ for every $m \in \Z$ and every $\lambda \in \Lambda$;
\item[(ii)] The function $\phi: \Lambda \times \Lambda \longrightarrow \Z$ defined by 
\begin{align*}
\phi(\alpha, \beta) := \Phi(\alpha + \beta) - \Phi(\alpha) - \Phi(\beta)
\end{align*}
is a symmetric bilinear form on $\Lambda$.
\end{itemize}
The functions $\Phi$ and $\phi$ are said to be associated.
\end{definition}

\begin{definition}
An ordered pair $(\Lambda, \Phi)$ with $\Lambda$ a free $\Z$-module of finite rank and $\Phi: \Lambda \longrightarrow \Z$ a quadratic form on $\Lambda$ will be called a quadratic space on $\Z$.
\end{definition}

Now, if the free $\Z$-module $\Lambda$ has finite rank $n$ and $\mathcal{B} = \{ \alpha_1, \dots, \alpha_n  \}$ is a $\Z$-basis for $\Lambda$, there is an associated symmetric matrix $A_{\Phi, \mathcal{B}} \in M_{n\times n}(\Z)$ whose entries are given by 
\begin{align*}
(A_{\Phi, \mathcal{B}})_{i, j} := \phi(\alpha_i, \alpha_j),
\end{align*}
for $1 \leq i, j \leq n$. Clearly the matrix $A_{\Phi,\mathcal{B}}$ depends on the choice of $\Z$-basis for $\Lambda$. If we have two $\Z$-bases $\mathcal{B}$ and $\mathcal{C}$ for $\Lambda$ and $P = P_{\mathcal{B}}^{\mathcal{C}} \in \op{GL}_n(\Z)$ is the change of basis matrix from $\mathcal{C}$ to $\mathcal{B}$, then 
\begin{align*}
A_{\Phi, \mathcal{C}} = P^t A_{\Phi, \mathcal{B}} P.
\end{align*}
The rank of the quadratic form $\Phi$ is the rank of the matrix $A_{\Phi, \mathcal{B}}$ with respect to any $\Z$-basis $\mathcal{B}$. Moreover, we define the discriminant of $\Phi$ to be $\Delta_{\Phi}= \op{disc}(\Phi) := (-1)^{\frac{n(n-1)}{2}} \det{(A_{\Phi, \mathcal{B}})}$.

Observe that the matrix $A_{\Phi, \mathcal{B}}$ is what is called an even matrix, i.e., a matrix in $M_{n \times n} (\Z)$ such that its diagonal entries lie in $2 \Z$. This is because for any $x \in \Lambda$ we have $\phi(x, x) = \Phi(2x) - 2 \Phi(x) = 2\Phi(x)$ and hence the diagonal entries of $A_{\Phi, \mathcal{B}}$ are
\begin{align*}
(A_{\Phi, \mathcal{B}})_{i, i} := \phi(\alpha_i, \alpha_i) = 2 \Phi(\alpha_i) \in 2 \Z.
\end{align*}

Now, if $[x]_{\mathcal{B}} \in \Z^n$ denotes the vector of coordinates of an element $x \in \Lambda$ in the basis $\mathcal{B}$, then 
\begin{align*}
\phi(x, y) = [x]_{\mathcal{B}}^{t} A_{\Phi, \mathcal{B}} [y]_{\mathcal{B}}.
\end{align*}
Therefore, recalling that $\phi(x, x) = 2\Phi(x)$, we have
\begin{align*}
\Phi(x) = \frac{1}{2}[x]_{\mathcal{B}}^{t} A_{\Phi, \mathcal{B}} [x]_{\mathcal{B}}.
\end{align*}
Thus, if $X = \begin{bmatrix} x_1 \\ \vdots \\ x_n\end{bmatrix}$ we can associate a homogeneous quadratic polynomial 
\begin{align*}
Q_{\Phi, \mathcal{B}}(x_1, \cdots, x_n) := \frac{1}{2} X^t A_{\Phi, \mathcal{B}}  X = \frac{1}{2} [x_1, \dots, x_n] A_{\Phi, \mathcal{B}} \begin{bmatrix} x_1 \\ \vdots \\ x_n\end{bmatrix} = \sum_{i = 1}^n \frac{\phi(\alpha_i, \alpha_i)}{2} x_{i}^2 + \sum_{i < j} \phi(\alpha_i, \alpha_j) x_i x_j.
\end{align*}
Moreover, in terms of this polynomial we have that if $[x]_{\mathcal{B}} = [x_1, \dots, x_n]^t$, then
\begin{align}\label{Phi_Quadratic}
\Phi(x) = Q_{\Phi, \mathcal{B}}(x_1, \dots, x_n).
\end{align}
The $\phi(\alpha_i,\alpha_j)$ are called the coefficients of the quadratic form.

A quadratic form $\Phi$ is said to be positive definite if the matrix $A_{\Phi, \mathcal{B}}$, thought as a matrix in $M_{n \times n}(\R)$, is positive definite.

\begin{definition}
If $\Phi_1: \Lambda_1 \longrightarrow \Z$ and $\Phi_2: \Lambda_2 \longrightarrow \Z$ are quadratic forms on $\Z$-modules $\Lambda_1$ and $\Lambda_2$, they are said to be isometric if there is a $\Z$-isomorphism $f: \Lambda_1 \longrightarrow \Lambda_2$ such that $\Phi_2(f(x)) = \Phi_1(x)$ for every $x \in \Lambda_1$. Moreover, the corresponding quadratic spaces $(\Lambda_1, \Phi_1)$ and $(\Lambda_2, \Phi_2)$ are said to be isomorphic if $\Phi_1$ and $\Phi_2$ are isometric and we write $(\Lambda_1, \Phi_1) \cong (\Lambda_2, \Phi_2)$ in this case.
\end{definition}

\section{Quadratic forms and theta series}

Let $\Lambda$ be a free $\Z$-module of rank $n$ and let $\Phi: \Lambda \longrightarrow \Z$ be a positive definite quadratic form on $\Lambda$. Then the theta series associated to $\Phi$ is the function
\begin{align*}
\theta_{\Phi}(z) := \sum_{\lambda \in \Lambda} q^{\Phi(\lambda)} = \sum_{\lambda \in \Lambda} e^{2 \pi i \Phi(\lambda) z},
\end{align*}
where as usual $q = e^{2 \pi i z}$ for $z \in \H$. Requiring $\Phi$ to be positive definite guarantees that the series defining $\theta_{\Phi}$ converges absolutely in $\H$ and moreover $\theta_{\Phi}$ is a holomorphic function on $\H$ (see e.g. \cite[chapter VI]{Ogg69}). Now, choosing a $\Z$-basis $\mathcal{B}$ for $\Lambda$, equation (\ref{Phi_Quadratic}) allows us to write
\begin{align*}
\theta_{\Phi}(z) = \sum_{x = (x_1, \dots, x_n) \in \Z^n} q^{Q_{\Phi, \mathcal{B}}(x)} = \sum_{x = (x_1, \dots, x_n) \in \Z^n} e^{2 \pi i Q_{\Phi, \mathcal{B}}(x) z}.
\end{align*}

In order to discuss the automorphy of the theta series (see Theorem \ref{Automorphy} below), we need to introduce the notion of the level of a quadratic form.

To define the level of a quadratic form we first define the level of an even symmetric matrix. Thus, let $A \in M_{n \times n} (\Z)$ be an even symmetric matrix (recall that even means that its entries are integers and that its diagonal entries lie in $2\Z$). Then, as is well known, we have the relation
\begin{align*}
A \adj{(A)} = \det{(A)} I_n,
\end{align*}
where $\adj{(A)} \in M_{n \times n}(\Z)$ is the adjugate matrix of $A$. If $A$ is invertible, this relation implies that the product $\det(A) A^{-1} \in M_{n \times n}(\Z)$ is also an even symmetric matrix (see e.g. the first lemma in Chapter VI of Ogg's book \cite{Ogg69} and the subsequent discussion). Thus since this implies that $|\det{(A)}| A^{-1}$ is an even symmetric matrix in $M_{n \times n}(\Z)$, we define the level of $A$ to be the least positive integer $N$ such that $N A^{-1}$ is an even symmetric matrix in $M_{n \times n}(\Z)$. Moreover, it can be seen that if $P \in \op{GL}_{n}(\Z)$ is an invertible matrix, then $P^t A P$ also has level $N$. This allows us to define unambiguously the level of a quadratic form as follows.

\begin{definition}
The level of a quadratic form $\Phi: \Lambda \longrightarrow \Z$ is defined to be the level of any associated matrix $A_{\Phi, \mathcal{B}}$ corresponding to a $\Z$-basis $\mathcal{B}$ of $\Lambda$.
\end{definition}

Now we recall the definition of the space of modular forms that we will consider, following \cite{Koblitz93} and \cite{lehman}. First, we will require the quadratic residue symbol $\left(\dfrac{a}{b} \right)$, defined for $a, b \in \Z$ by the following properties.
\begin{enumerate}
\item If $p$ is an odd prime, then $\left( \dfrac{a}{p} \right)$ is just the Legendre symbol.
\item If $a \in \Z$ is odd, then $\left(\dfrac{a}{2} \right) = (-1)^{(a^2 - 1)/8}$.
\item We have $\left(\dfrac{a}{-1} \right) = 1$ if $a \geq 0$ and $\left(\dfrac{a}{-1} \right) = -1$ if $a < 0$.
\item $\left(\dfrac{a}{b} \right) = 0$ if $\gcd{(a, b)} > 1$.
\item $\left(\dfrac{1}{0} \right) = 1$ and $\left(\dfrac{a}{0} \right) = 0$ for $a \neq 1$.
\item $\left(\dfrac{a}{bc} \right) = \left(\dfrac{a}{b} \right) \cdot \left(\dfrac{a}{c} \right)$ for every $a, b, c \in \Z$.
\end{enumerate}

Second, if $d \in \Z \smallsetminus \{ 0\}$, there are integers $d_f$ and $d_s$ such that we can write $d$ uniquely as $d = d_{f} d_s^2$ with $d_f$ square-free. Then we put 
\begin{align*}
D_d := 
\begin{cases}
d_f & \text{if $d_f \equiv 1 \pmod 4$}\\
4d_f & \text{if $d_f \equiv 2, 3 \pmod{4}$}.
\end{cases}
\end{align*}
Note in particular that if $d$ is not a perfect square, then $D_d$ is just the discriminant of the quadratic number field $\Q(\sqrt{d})$. With this notation in place, we define the quadratic Dirichlet character $\chi_d(n):= \left(  \dfrac{D_d}{n} \right)$. This character has conductor $|D_d|$. 

\begin{definition}
Let $n \in \Z$ and $N \in \Z_{\geq 1}$. Moreover, assume that $4|N$ if $n$ is odd. Let $\chi$ be a Dirichlet character modulo $N$. Then a holomorphic function $f: \H \longrightarrow \C$ is called a modular form of weight $n/2$, level $N$ and character $\chi$, if it is holomorphic at every cusp of $\Gamma_0(N)$ and if for every $\gamma = \begin{pmatrix} a & b \\ c & d \end{pmatrix} \in \Gamma_0(N)$ it transforms as
\begin{align*}
f(\gamma \cdot z) = 
\begin{cases}
\chi(d) (cz + d)^{n/2} f(z) & \text{if $n$ is even},\\
\chi(d) \chi_c(d)^n \varepsilon_d^{-n} (cz + d)^{n/2} f(z) & \text{if $n$ is odd,}
\end{cases}
\end{align*}
where
\begin{align*}
\varepsilon_d := 
\begin{cases}
1 & \text{if $d \equiv 1 \pmod{4}$,}\\
i & \text{if $d \equiv 3 \pmod{4}$.}
\end{cases}
\end{align*}
The $\C$-vector space of all such forms is denoted by $M_{n/2}(\Gamma_0(N), \chi)$.
\end{definition}

The following result is a special case of a more general result of Shimura \cite[Proposition 2.1]{Shimura73} and it is stated in an equivalent way in \cite[p. 400]{lehman}.

\begin{theorem}\label{Automorphy}
Let $\Phi: \Lambda \longrightarrow \Z$ be a quadratic form on a free $\Z$-module $\Lambda$ of rank $n$. Let $N_{\Phi}$ be the level of the quadratic form $\Phi$ and let $A = A_{\Phi, \mathcal{B}}$ be the matrix of $\Phi$ with respect to some $\Z$-basis $\mathcal{B}$ of $\Lambda$. Define the integer $d_{\Phi}$ by
\begin{align*}
d_{\Phi} := 
\begin{cases}
\det{(A)} &\text{if $n \equiv 0 \pmod{4}$}\\
-\det{(A)} &\text{if $n \equiv 2 \pmod{4}$}\\
\dfrac{\det{(A)}}{2} &\text{if $n \equiv 1, 3 \pmod{4}$}.
\end{cases}
\end{align*} 
Then the theta function 
\begin{align*}
\theta_{\Phi}(z) := \sum_{\lambda \in \Lambda} q^{\Phi(\lambda)} = \sum_{\lambda \in \Lambda} e^{2 \pi i \Phi(\lambda) z}
\end{align*}
is a modular form in the space $M_{\frac{n}{2}}(\Gamma_0(N_{\Phi}), \chi_{d_{\Phi}})$. 
\end{theorem}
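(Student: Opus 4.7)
The plan is to derive this as a specialization of Shimura's Proposition 2.1 in \cite{Shimura73}, whose proof rests on Poisson summation applied to a Gaussian built from $\Phi$. To set up, fix a $\Z$-basis $\mathcal{B}$ so that $\Phi(x) = \tfrac{1}{2} x^t A x$ with $A = A_{\Phi,\mathcal{B}}$ even, symmetric, and positive definite; then
\begin{align*}
\theta_\Phi(z) = \sum_{x \in \Z^n} e^{\pi i (x^t A x) z},
\end{align*}
and invariance under $T: z \mapsto z+1$ follows immediately from $\Phi(\lambda) \in \Z$.

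The core step is Poisson summation. Applied to the Gaussian $f_z(x) = e^{\pi i (x^t A x) z}$, whose Fourier transform equals an explicit constant times $z^{-n/2} (\det A)^{-1/2} e^{-\pi i (y^t A^{-1} y)/z}$, summing over $\Z^n$ against its dual lattice $A^{-1}\Z^n$ yields a transformation law for $\theta_\Phi$ under $S_N: z \mapsto -1/(Nz)$, where $N = N_\Phi$ is the level. The decisive point is that $NA^{-1}$ is even integral, so the dual sum breaks into cosets of $\Z^n$ modulo $N$, and these coset contributions assemble into Gauss sums which evaluate, in Shimura's framework, to the quadratic character $\chi_{d_\Phi}$ advertised in the statement.

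To extend from the two generators $T$ and $S_N$ to the full group $\Gamma_0(N)$, one uses that $\Gamma_0(N)$ is generated by $T$ together with conjugates of $T$ by the Fricke involution $\begin{pmatrix} 0 & -1 \\ N & 0 \end{pmatrix}$. A short cocycle check then promotes the two transformation laws to automorphy under every $\gamma = \begin{pmatrix} a & b \\ c & d \end{pmatrix} \in \Gamma_0(N)$ of weight $n/2$ with multiplier $\chi_{d_\Phi}(d)$ when $n$ is even, and $\chi_{d_\Phi}(d)\chi_c(d)^n \varepsilon_d^{-n}$ when $n$ is odd. Holomorphy at every cusp is then automatic: since $\Phi \geq 0$ with equality only at $0$, the $q$-expansion of $\theta_\Phi$ at $\infty$ has non-negative integer exponents, and the transformation laws transport this to every other cusp.

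The main obstacle is the bookkeeping in the odd-$n$ case: extracting the exact character $\chi_{d_\Phi}$ requires evaluating the Gauss sums produced by Poisson summation and matching them, via quadratic reciprocity and the rule $\bigl(\tfrac{\cdot}{-1}\bigr)$, against Shimura's half-integer weight multiplier system. This matching is precisely why the three-case definition of $d_\Phi$ appears: the sign $(-1)^{n(n-1)/2}$ in $\op{disc}(\Phi)$, together with the factor of $2$ coming from the evenness of $A$, must be absorbed so that $d_\Phi \equiv 0$ or $1 \pmod 4$ and $\chi_{d_\Phi}$ carries the correct conductor.
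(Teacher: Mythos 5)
The paper gives no proof of this statement at all: it is quoted as a special case of Shimura's Proposition 2.1, with Lehman's reformulation cited for the precise shape of $d_{\Phi}$ and $\chi_{d_{\Phi}}$. So your opening move --- reduce everything to Shimura --- is literally the paper's entire argument, and your surrounding sketch (Poisson summation applied to $e^{\pi i (x^t A x)z}$, the dual lattice $A^{-1}\Z^n$ and the evenness of $NA^{-1}$, Gauss sums producing the quadratic character, and the three-case normalization of $d_{\Phi}$ so that $\chi_{d_\Phi}$ has the right conductor) is a reasonable description of what sits inside that citation.

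That said, one step of your sketch is genuinely false and would sink any attempt to make it self-contained: $\Gamma_0(N)$ is \emph{not} generated by $T$ together with its conjugates under the Fricke involution. Those conjugates are just the powers of $\begin{pmatrix} 1 & 0 \\ -N & 1 \end{pmatrix}$, so the group your cocycle check would reach is the $2$-generated group $\left\langle T, \begin{pmatrix} 1 & 0 \\ N & 1 \end{pmatrix} \right\rangle$ (together with $-I$), which is a proper subgroup of $\Gamma_0(N)$ as soon as $\Gamma_0(N)$ requires more than two generators --- for instance $\Gamma_0(11)$ modulo $\pm I$ is free of rank $3$ (genus $1$, two cusps, no elliptic points), hence not $2$-generated. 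The standard proofs do not argue this way: Shimura (and Ogg, Ch.~VI, in the even-rank case) establish the transformation law for an \emph{arbitrary} $\gamma = \begin{pmatrix} a & b \\ c & d \end{pmatrix} \in \Gamma_0(N)$ directly, by splitting the lattice sum into residue classes modulo $c$ before applying Poisson summation, so that the Gauss sum depends on $c$ and $d$ and the multiplier $\chi_{d_\Phi}(d)\chi_c(d)^n\varepsilon_d^{-n}$ comes out for each $\gamma$ separately. You should also record two hypotheses your sketch uses silently: positive definiteness of $\Phi$ (needed for convergence and for holomorphy at the cusps) and, when $n$ is odd, the divisibility $4 \mid N_{\Phi}$ needed for the half-integral-weight space to be defined. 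Since the paper only ever invokes this theorem as a black box, none of this affects the paper itself, but as a standalone proof the generation claim is a real gap.
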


\section{Trace forms on totally real number fields}\label{LaTraza}

Let $K$ be a totally number field of degree $n$ and let $\mathcal{O}_{K}$ be its ring of integers. Then the trace zero submodule of $\mathcal{O}_K$, defined by
\begin{align*}
\Lambda_K = \mathcal{O}_K^{0} := \ker{(\Tr_{K/\Q})} = \{ \alpha \in \mathcal{O}_K \suchthat \Tr_{K/\Q}(\alpha) = 0 \},
\end{align*}
is a free $\Z$-module of rank $n - 1$. This is because since $\Tr_{K/\Q}: \mathcal{O}_K \longrightarrow \Z$ is a non-zero homomorphism and hence its image has rank $1$.

The following proposition gives us a quadratic form constructed from the trace. We include a proof for the convenience of the reader.

\begin{proposition}\label{ElRango}
The function $\Phi_K: \Lambda_K \longrightarrow \Z$ given by $\Phi_K(\alpha) := \frac{1}{2}\Tr_{K/\Q}(\alpha^2)$ is a positive definite quadratic form of rank $n - 1$ and its associated bilinear function $\phi_K: \Lambda_K \times \Lambda_K \longrightarrow \Z$ satisfies $\phi_K(\alpha, \beta) = \Tr_{K/\Q}(\alpha \beta)$.
\end{proposition}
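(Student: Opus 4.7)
The plan is to verify the four things the definition of a quadratic form plus the stated extras require: (i) $\Phi_K$ is $\Z$-valued on $\Lambda_K$, (ii) $\Phi_K(m\alpha)=m^2\Phi_K(\alpha)$, (iii) the associated symmetric function $\phi_K$ is $\Z$-bilinear and equals $\Tr_{K/\Q}(\alpha\beta)$, (iv) $\Phi_K$ is positive definite of rank $n-1$.

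The only nontrivial point is (i), because \emph{a priori} $\tfrac12\Tr_{K/\Q}(\alpha^2)$ looks like it could be a half-integer. The key observation is that if $\alpha\in\mathcal{O}_K^0$ has conjugates $\alpha_1,\dots,\alpha_n$, then $\sum_i\alpha_i=0$, so by Newton's identity
\begin{align*}
\Tr_{K/\Q}(\alpha^2)=\sum_{i=1}^n\alpha_i^2=\Bigl(\sum_i\alpha_i\Bigr)^{\!2}-2\sum_{i<j}\alpha_i\alpha_j=-2e_2(\alpha_1,\dots,\alpha_n),
\end{align*}
and $e_2(\alpha_1,\dots,\alpha_n)\in\Z$ since it is an elementary symmetric polynomial evaluated on the roots of the (integer-coefficient) minimal polynomial of $\alpha$. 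Hence $\Tr_{K/\Q}(\alpha^2)\in 2\Z$ and $\Phi_K(\alpha)\in\Z$. Step (ii) is immediate from $\Tr_{K/\Q}((m\alpha)^2)=m^2\Tr_{K/\Q}(\alpha^2)$.

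For (iii), I would just expand
\begin{align*}
\Phi_K(\alpha+\beta)-\Phi_K(\alpha)-\Phi_K(\beta)=\tfrac12\Tr_{K/\Q}\bigl((\alpha+\beta)^2-\alpha^2-\beta^2\bigr)=\Tr_{K/\Q}(\alpha\beta),
\end{align*}
which is manifestly symmetric, $\Z$-bilinear (since $\alpha\beta\in\mathcal{O}_K$ and the trace is $\Z$-linear on $\mathcal{O}_K$), and therefore identifies $\phi_K$ as claimed. For (iv), totally-realness of $K$ means all $n$ embeddings $\sigma_i:K\hookrightarrow\R$ have image in $\R$, so $\Tr_{K/\Q}(\alpha^2)=\sum_{i=1}^n\sigma_i(\alpha)^2\ge 0$ with equality iff all $\sigma_i(\alpha)=0$, i.e. iff $\alpha=0$; this gives positive definiteness of $\Phi_K$ on $\Lambda_K$. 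Finally, as stated in the opening paragraph of \S\ref{LaTraza}, $\Lambda_K$ is free of rank $n-1$ because $\Tr_{K/\Q}:\mathcal{O}_K\to\Z$ is a nonzero $\Z$-linear map onto a rank-$1$ submodule; since $\Phi_K$ is positive definite on this free $\Z$-module of rank $n-1$, the Gram matrix $A_{\Phi_K,\mathcal{B}}$ is positive definite as a real matrix and in particular nonsingular, so $\op{rank}(\Phi_K)=n-1$ by the definition in \S\ref{QuadraticBackground}. I do not expect any real obstacle beyond making the integrality step (i) clean.
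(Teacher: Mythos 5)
Your proof is correct and follows essentially the same route as the paper: integrality via $\Tr_{K/\Q}(\alpha^2)=-2\sum_{i<j}\sigma_i(\alpha)\sigma_j(\alpha)$ using $\Tr_{K/\Q}(\alpha)=0$, the same polarization computation for $\phi_K$, and positive definiteness from total realness (the paper justifies integrality by noting the sum is a rational algebraic integer, while you invoke integrality of symmetric functions of the conjugates --- strictly, $e_2(\alpha_1,\dots,\alpha_n)$ is a coefficient of the \emph{characteristic} polynomial of $\alpha$ rather than its minimal polynomial when $\alpha$ does not generate $K$, but that polynomial still has integer coefficients, so your conclusion stands). Your treatment of the equality case in positive definiteness and of the rank statement is slightly more explicit than the paper's, but the argument is the same.
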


\begin{proof}
First we need to show that $\Phi_K$ takes values in $\Z$. Let $\alpha \in \Lambda_K$.  Since $\Phi_K(\alpha)$ is rational it is enough to show that  $\Phi_K(\alpha)$ is an algebraic integer. Now, let the embeddings $K \hookrightarrow \R$ be $\sigma_1, \dots, \sigma_n$. Then for any $\alpha \in \Lambda_K$ we have
\begin{align*}
\Phi_K(\alpha) &= \frac{1}{2} \Tr_{K/\Q}(\alpha^2) = \frac{1}{2} \sum_{i = 1}^n \sigma_i(\alpha^2) = \frac{1}{2} \sum_{i = 1}^n \sigma_i(\alpha)^2\\
&= \frac{1}{2} \left( \left( \sum_{i = 1}^n \sigma_i(\alpha) \right)^2 - 2 \sum_{i < j} \sigma_i(\alpha) \sigma_j(\alpha) \right) \\
&= \frac{1}{2} \left( \Tr_{K/\Q}(\alpha)^2 - 2 \sum_{i < j} \sigma_i(\alpha) \sigma_j(\alpha) \right) \\
&= - \sum_{i < j} \sigma_i(\alpha) \sigma_j(\alpha),
\end{align*}
where we used that $\Tr_{K/\Q}(\alpha) = 0$. Since $\alpha$ is an algebraic integer, so are its conjugates $\sigma_{k}(\alpha)$ for all $k$, and in particular that implies that $\Phi_K(\alpha) = - \sum_{i < j} \sigma_i(\alpha) \sigma_j(\alpha)$ is an algebraic integer as we wanted to show. 

Now, to see that $\Phi_K$ is a quadratic form, observe that the linearity of the trace implies that for any $m \in \Z$ and $\alpha \in \Lambda_K$ we have
\begin{align*}
\Phi_K(m \alpha) = \frac{1}{2} \Tr_{K/\Q}(m^2 \alpha^2) = \frac{m^2}{2} \Tr_{K/\Q}(\alpha^2) = m^2 \Phi_K(\alpha).
\end{align*}
Also, the associated function $\phi_K: \Lambda_K \times \Lambda_K \longrightarrow \Z$, satisfies
\begin{align*}
\phi_K(\alpha, \beta) &= \Phi_K(\alpha + \beta) - \Phi(\alpha) - \Phi(\beta) = \frac{1}{2} \left( \Tr_{K/\Q}((\alpha + \beta)^2) - \Tr_{K/\Q}(\alpha^2) - \Tr_{K/\Q}(\beta^2) \right) \\
&= \frac{1}{2} \left( \Tr_{K/\Q}(\alpha^2 + 2\alpha \beta + \beta^2) - \Tr_{K/\Q}(\alpha^2) - \Tr_{K/\Q}(\beta^2) \right) = \Tr_{K/\Q}(\alpha \beta).
\end{align*}
Thus, since $\phi_K(\alpha, \beta) = \Tr_{K/\Q}(\alpha \beta)$, an easy calculation shows that this map is bilinear. Since $K$ is totally real the values of  $\Phi_K$  are sum of squares of real numbers, hence $\phi_K$ is a positive definite bilinear pairing. 
\end{proof}

\begin{lemma}\label{Discriminant}

Let $K$ be a degree $n$ number field of discriminant $d$. Suppose that $m$ is a positive integer such that $\Tr_{K/\Q}(O_{K})=m\Z$. Then, the discriminant of the quadratic form  $\Phi_K$ is equal to $\displaystyle (-1)^\frac{(n-1)(n-2)}{2} \frac{dn}{m^{2}}.$ In particular, if d is not divisible by the $n$th power of any integer bigger than $1$ then the discriminant of $\Phi_K$ is equal to $\displaystyle (-1)^\frac{(n-1)(n-2)}{2} dn.$

\end{lemma}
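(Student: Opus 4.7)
The plan is to compute $\det(A_{\phi_K, \mathcal{B}})$ by comparing the trace pairing on $\Lambda_K = \mathcal{O}_K^{0}$ with the full trace pairing on $\mathcal{O}_K$, whose Gram determinant is by definition the discriminant $d$. The key observation is that over $\Q$ we have an orthogonal decomposition
\[
K \;=\; (\Lambda_K \otimes_\Z \Q) \;\perp\; \Q \cdot 1
\]
for the pairing $(x,y) \mapsto \Tr_{K/\Q}(xy)$, since $\Tr_{K/\Q}(\alpha \cdot 1) = \Tr_{K/\Q}(\alpha) = 0$ for every $\alpha \in \Lambda_K$. I would therefore introduce the auxiliary sublattice
\[
L := \Lambda_K \oplus \Z \cdot 1 \;\subseteq\; \mathcal{O}_K,
\]
which is an \emph{orthogonal} internal direct sum under the trace pairing. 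Adjoining $1$ to any $\Z$-basis $\mathcal{B}$ of $\Lambda_K$ yields a $\Z$-basis of $L$ whose Gram matrix is block diagonal with blocks $A := A_{\phi_K, \mathcal{B}}$ and the $1\times 1$ block $(n)$ coming from $\Tr_{K/\Q}(1) = n$, so the Gram determinant of $L$ equals $n \det(A)$.

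Next I would compute the index $[\mathcal{O}_K : L]$ by comparing the short exact sequences
\[
0 \to \Lambda_K \to \mathcal{O}_K \to m\Z \to 0 \qquad\text{and}\qquad 0 \to \Lambda_K \to L \to n\Z \to 0,
\]
where the second image is generated by $\Tr_{K/\Q}(1) = n$. Since $n \in m\Z$ we have $m \mid n$, and quotienting the two surjections yields $\mathcal{O}_K / L \cong m\Z / n\Z$, a cyclic group of order $n/m$. The standard change-of-basis formula for Gram determinants of full-rank sublattices then gives
\[
n \det(A) \;=\; [\mathcal{O}_K : L]^2 \, d \;=\; \frac{n^2}{m^2}\, d,
\]
so $\det(A) = dn/m^2$. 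Multiplying by $(-1)^{(n-1)(n-2)/2}$, the sign prescribed by the definition of $\op{disc}$ for the rank-$(n-1)$ quadratic form $\Phi_K$, gives the main formula.

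For the ``In particular'' clause I would deduce $m = 1$ from the divisibility $m^n \mid |d|$. The condition $\Tr_{K/\Q}(\mathcal{O}_K) \subseteq m\Z$ is equivalent, by definition of the codifferent $\mathfrak{d}_{K/\Q}^{-1}$, to $\tfrac{1}{m}\mathcal{O}_K \subseteq \mathfrak{d}_{K/\Q}^{-1}$, hence to the ideal containment $\mathfrak{d}_{K/\Q} \subseteq m \mathcal{O}_K$. The tower $\mathfrak{d}_{K/\Q} \subseteq m \mathcal{O}_K \subseteq \mathcal{O}_K$ then yields
\[
|d| = [\mathcal{O}_K : \mathfrak{d}_{K/\Q}] = [\mathcal{O}_K : m\mathcal{O}_K] \cdot [m\mathcal{O}_K : \mathfrak{d}_{K/\Q}] = m^n \cdot [m\mathcal{O}_K : \mathfrak{d}_{K/\Q}],
\]
so $m^n \mid |d|$, as required. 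The main bookkeeping obstacle I anticipate is the index step $[\mathcal{O}_K : L] = n/m$, where the direction of the containment $n\Z \subseteq m\Z$ must be kept straight; the codifferent identification in the last step is standard but should be justified via the non-degeneracy of the trace pairing.
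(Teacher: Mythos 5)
Your proof is correct and follows essentially the same route as the paper: the orthogonal decomposition $\Lambda_K \perp \Z\cdot 1$ inside $\mathcal{O}_K$, the index formula for Gram determinants of full-rank sublattices, and the identification $\mathcal{O}_K/(\Lambda_K \oplus \Z\cdot 1) \cong m\Z/n\Z$ giving index $n/m$. The only difference is that you supply an explicit justification (via the codifferent) of the divisibility $m^n \mid |d|$ used in the ``in particular'' clause, which the paper merely asserts.
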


\begin{proof}

Since $O_{K}^{0}$ has $\Z$-rank equal to $n-1$ it is enough to show that  the determinant $d_0$ of a Gram matrix of the bilinear pairing  $\Tr_{K/\Q}(\cdot, \cdot)$ in any basis of $O_{K}^{0}$ is equal to $\frac{dn}{m^{2}}$. Since $\Z$ and $O_{K}^{0}$ are orthogonal with respect the trace pairing, the determinant of the trace pairing over  $\Z +O_{K}^{0}$ is equal to $nd_{0}$. On the other hand, since  $\Z +O_{K}^{0}$ is a subgroup of $O_{K}$ of full rank, the determinant is equal to $[O_{K} : \Z +O_{K}^{0} ]^{2}d $ and thus \[d_{0}= [O_{K} : \Z +O_{K}^{0} ]^{2}d/n.\] Since $\Z +O_{K}^{0}$ is a subgroup of $O_{K}$ that contains the Kernel of the trace map the group $O_{K}/(\Z +O_{K}^{0})$ is isomorphic to $m\Z/n\Z$, in particular $[O_{K} : \Z +O_{K}^{0} ] =\frac{n}{m}$ from which $d_{0}=\frac{nd}{m^{2}}$ follows. The final remark is due to the fact that $m^{n}$ divides $d$.

\end{proof}

Since the level of an integral quadratic form of discriminant $\Delta$ divides $2\Delta$ we have that: 

\begin{corollary}\label{TheLevel}

Let $K$ be a degree $n$ number field of discriminant $d$. Then, the level of the quadratic form $\Phi_K$ divides $2nd$.

\end{corollary}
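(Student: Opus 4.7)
The plan is to combine Lemma \ref{Discriminant} with the general fact, recalled just above the corollary, that the level of any integral quadratic form of discriminant $\Delta$ divides $2\Delta$. Everything after that is bookkeeping.

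First, I would record why the general bound holds. If $A$ is an even symmetric integer matrix representing $\Phi_K$ in some $\Z$-basis $\mathcal{B}$, the adjugate identity $A \cdot \adj(A) = \det(A) I_n$ shows that $|\det(A)|\, A^{-1} = \pm \adj(A)$ lies in $M_{n \times n}(\Z)$. This matrix need not have even diagonal, but doubling does the trick: every entry of $2 |\det(A)|\, A^{-1}$ is twice an integer, so in particular its diagonal lies in $2\Z$. Hence $2 |\det(A)|\, A^{-1}$ is an even symmetric integer matrix, and by the definition of the level the level $N_{\Phi_K}$ divides $2|\det(A)| = 2|\Delta_{\Phi_K}|$, since the discriminant differs from $\det(A)$ only by a sign.

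Next I would apply Lemma \ref{Discriminant}. Let $m \geq 1$ be the positive integer with $\Tr_{K/\Q}(\mathcal{O}_K) = m\Z$. The lemma yields
\begin{align*}
|\Delta_{\Phi_K}| = \frac{dn}{m^2},
\end{align*}
which in particular is a divisor of $dn$. Combining with the previous step, $N_{\Phi_K}$ divides $2 dn / m^2$, and therefore $N_{\Phi_K} \mid 2nd$, which is exactly the claim of the corollary.

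The only step requiring any thought is the general ``level divides $2\Delta$'' fact; the doubling trick just described handles the even-diagonal requirement, after which the corollary is an immediate consequence of the discriminant computation in Lemma \ref{Discriminant}. No hypotheses beyond those already in the statement are needed, and in particular no assumption of the form $\gcd(n,d)=1$ or $d$ being a fundamental discriminant enters at this stage.
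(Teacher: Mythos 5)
Your proof is correct and follows the same route as the paper, which simply combines the determinant computation of Lemma \ref{Discriminant} with the cited fact that the level of an integral quadratic form of discriminant $\Delta$ divides $2\Delta$. Your adjugate-plus-doubling argument supplies a clean justification of that cited fact (and is in fact more careful than the paper's earlier remark that $\det(A)A^{-1}$ is already even, which fails in odd rank, e.g.\ for $A=(2)$), but the overall structure of the deduction is identical.
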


\begin{remark}

In the case of $n=3$  and $d$ is fundamental it's not hard to observe, see  \cite{GMS1},  that the level is actually $nd$. This is not necessary the case in higher degrees, hence the best we can say is that the level divides $2nd$

\end{remark}

\subsubsection*{Proof of Theorem \ref{LaDefi}}
Recall that  $K$ is a totally real number field of degree $n$ with discriminant $d_K$. Moreover, $d_K$ is a fundamental discriminant and $\gcd{(n, d_K)} = 1$.  
\begin{proof}
It follows from Lemma \ref{Discriminant} that the form $\Phi_K$ has determinant $nd_{K}$, and by Corollary \ref{TheLevel}, its level is a divisor of $2nd_{K}$. By definition, the theta series associated to the form $\Phi_K$ is given by 

\begin{align*}
\theta_K(z) := \sum_{\alpha \in \mathcal{O}_K^{0}} e^{\pi i \Tr_{K/\Q}{(\alpha^2)} z}
\end{align*}

Since $\Phi_K$ has rank $n-1$, thanks to Proposition \ref{ElRango}, determinant $nd_{K}$ and level dividing $2nd_{K}$ it follows from Theorem \ref{Automorphy} that \begin{align*}
\theta_K \in M_{\frac{n - 1}{2}} \left( \Gamma_0(2nd_K), \left( \frac{\delta_{n} nd_{K}}{\cdot} \right) \right),
\end{align*}
where $\delta_n =\begin{cases}
 
(-1)^{\frac{n-1}{2}} &\text{if $n$ is odd,}\\
\frac{1}{2} &\text{if $n$ is even.}
\end{cases}$ \end{proof}

\section{Proof of Theorem \ref{MainTheorem}}

As was shown in section \ref{LaTraza}, the quadratic form $\Phi_K = \frac{1}{2} \Tr_{K/\Q}$ on the trace zero module $\Lambda_K = \mathcal{O}_K^0$ has rank $n - 1$. Since for any isomorphism  $\sigma : K \to L$ between number fields, and for all $\alpha \in K$, we have that $\Tr_{K/\Q}(\alpha) =\Tr_{L/\Q}(\sigma(\alpha))$ the implication $K \cong L \implies \theta_K = \theta_L$ is trivial. We prove the other implication analyzing the cases $n =2, 3, 4$ individually.

\subsection{n = 1} There is nothing to prove here.

\subsection{n = 2}
If $K$ is the real quadratic field  $\Q(\sqrt{d})$,  where $d >1$ is a square free integer, then a calculation shows that $\mathcal{O}_{K}^{0}=\Z\cdot \sqrt{d}$ and therefore $\displaystyle \theta_{K}(z)= \vartheta_{d}(z)= 1+ 2\sum_{n=1}^{\infty}e^{\pi i dn^2 z}.$ Taking $z = i$, the result follows from the injectivity of the function $\displaystyle f(x)=\sum_{n=1}^{\infty}e^{-\pi xn^2 }$ for $x > 0$.
 
\subsection{n = 3}
This was proved by the second author in \cite[Theorem 3.7]{GMS1}.
 
\subsection{n = 4}
Given a positive definite quadratic form $Q(x_1, \dots, x_n) \in \R[x_1, \dots, x_n]$ and a positive real number $t \in \R_{>0}$, we define the representation number $A(Q, t)$ by
\begin{align*}
A(Q, t):= \# \{ x \in \Z^n \suchthat Q(x) = t \}.
\end{align*}
Now, for the quadratic forms $\Phi_K$ and $\Phi_L$, choose bases $\mathcal{B}_K$ and $\mathcal{B}_L$ and let $A_{\Phi_K, \mathcal{B}_K}$ and $A_{\Phi_L, \mathcal{B}_L}$ be the corresponding matrices in $M_{3 \times 3}(\Z)$. Moreover, let $Q_{K, \mathcal{B}_K}$ and $Q_{L, \mathcal{B}_L}$ be the corresponding quadratic polynomials as in Section~\ref{QuadraticBackground}. Then, we have
\begin{align*}
\theta_K(z) = \sum_{\alpha \in \mathcal{O}_K^0} q^{\Phi_K(\alpha)} = \sum_{x \in \Z^3} q^{Q_{K, \mathcal{B}_K}(x)} = 1 + \sum_{n = 1}^{\infty} A(Q_{K, \mathcal{B}_K}, n) q^{n}
\end{align*}
and similarly for $\theta_L$. Therefore, since $\theta_K = \theta_L$, the corresponding representation numbers are equal, i.e., we have
\begin{align*}
A(Q_{K, \mathcal{B}_K}, n) = A(Q_{L, \mathcal{B}_L}, n)
\end{align*}
for every $n \geq 1$. Then since the forms $Q_{K, \mathcal{B}_K}$ and $Q_{L, \mathcal{B}_L}$ are positive definite and ternary, a result of Schiemann \cite[Theorem 4.4]{schiemann} implies that there is a matrix $U \in \op{GL}_3(\Z)$ such that 
\begin{align}\label{QuadraticRelation}
Q_{L, \mathcal{B}_L}(Ux) = Q_{K, \mathcal{B}_K}(x). 
\end{align}
Now, let the basis $\mathcal{B}_L = \{ \beta_1, \beta_2, \beta_3 \}$. Thus we define the isomorphism of $\Z$-modules $T: \mathcal{O}_K^0 \longrightarrow \mathcal{O}_L^0$ by $T(\alpha) := [\beta_1, \beta_2, \beta_3] U[\alpha]_{\mathcal{B}_K}$, i.e., the coordinates of $T(\alpha)$ in the basis $\mathcal{B}_L$ are given by $[T(\alpha)]_{\mathcal{B}_L} = U [\alpha]_{\mathcal{B}_K}$ for every $\alpha \in \mathcal{O}_K^0$. Then, by equations (\ref{QuadraticRelation}) and (\ref{Phi_Quadratic}) we see that $\Phi_L(T(\alpha)) = \Phi_K(\alpha)$ for every $\alpha \in \mathcal{O}_K^0$ and hence the quadratic forms $\Phi_K$ and $\Phi_L$ are isometric. Thus we have an isomorphism of quadratic spaces
\begin{align*}
(\mathcal{O}_K^0, \Phi_K) \cong (\mathcal{O}_L^0, \Phi_L).
\end{align*}
Then a recent theorem of the second author and Rivera-Guaca \cite[Theorem 2.12]{GMS3} implies that $K \cong L$, which completes the proof of the theorem.

\section{Computations and heuristics for linear independence}\label{Heuristics}

Suppose we fix the discriminant $\disc(K)$ of the field $K$ and ask a finer question about the theta series associated to the number fields of discriminant $\disc(K)$.  In particular, it was proved in \cite{GMS1} that for cubic number fields these forms were linearly independent.  We ask the same question here.


\subsection{Computational evidence}  The evidence we present below was computed in Sage \cite{Sage}.  The data related to the number fields was downloaded from the LMFDB \cite{lmfdb} and from the database described in \cite{malle} and due Kl\"uners and Malle.  The LMFDB contains totally real quartic number fields up to discriminant $10^7$ and the Kl\"uners--Malle database contains totally real quartic number fields up to discriminant $10^{10}$.  We used both data sets to compare and verify results.

We restrict to those fields that satisfy our conditions (totally real, Galois group equal to $S_4$ \cite{Kondo}).  We further restrict to those discriminants for which there is more than one field of that discriminant.  This leave us with $1301494$ fields to consider.  It was shown computationally that for all the discriminants, the resulting theta series were linearly independent.  The code and examples can be found at \cite{code}.

\subsection{Two theta series represent the same prime}\label{sec:sameprime}

In \cite{GMS1} it was shown that if two integral binary quadratic forms of the same discriminant both represented the same prime $p$, then the two forms are equivalent.  This was a key step in proving the linear independence of the set of quadratic forms associated to number fields of a fixed discriminant.  We attempted to follow the same approach.  But, as shown in Table~\ref{tbl:same-prime} there are three quartic fields of discriminant $35537$ satisfying our conditions but two of them represent the same prime.

\begin{table}[H]
\begin{center}
\begin{tabular}{lcc}
Polynomial $f$ & Quadratic form coefficients & Theta series $\theta_K$ \\\hline
 $x^4 - 2x^3 - 9x^2 + 5x + 16$ & $\begin{pmatrix}95506 & 93618 & 261632 \\  * & 22943 & 128229\\  * &  * & 179181  \end{pmatrix}$ & $1 + 2q^{23} + 2q^{27} + O(q^{30})$\\\hline
 $x^4 - x^3 - 8x^2 - 3x + 4$ & $\begin{pmatrix}321 & 1038 & -505 \\  * & 851 & -861 \\ * &  * & 245 \end{pmatrix}$& $1 + 2q^{21} + 4q^{23} + O(q^{30})$\\\hline
 $x^4 - 2x^3 - 5x^2 + 5x + 4$ & $\begin{pmatrix}527 & 3957 & 7078 \\  * & 7439 & 26613 \\  * &  * & 23805\end{pmatrix}$ & $ 1 + 2q^{11} + 2q^{26} + O(q^{30})$
\end{tabular}
\end{center}
\caption{The field is $K = \Q[x]/\langle f(x) \rangle$, where $f(x)$ is the polynomial given in the first column.  The symmetric matrix $A = (a_{ij})$ that is displayed in the second column contains the coefficients of $\displaystyle{\Phi_K(x_1, x_2, x_3) = \sum_{i = 1}^3 a_{ii}x_i^2 + 2\sum_{j > i} a_{ij}x_i x_j}$.  Finally, the third column displays the first terms of the $q$-expansion of the theta series $\theta_K$.}\label{tbl:same-prime}
\end{table}

By computing the $\theta$-series of the Quadratic forms associated to $K_1$ and $K_2$ in Table~\ref{tbl:same-prime} we see they both represent $23$, rendering the approach in \cite{GMS1} not viable.  This led us to consider a second approach we describe next.

\subsection{Two theta series with the same minimum}

If we knew that a set of quadratic forms of the same discriminant had different positive minima, then it would be easy to show that the set of forms were linearly independent.  After the approach in \cite{GMS1} described in Section~\ref{sec:sameprime} was seen not to be viable, we considered this approach.  As the following example shows, this approach was not viable either.

We remark that the notation is the same as in Table~\ref{tbl:same-prime}.  We consider the two quartic number non-isomorphic number fields with discriminant $4024049$.  Let $K_1$ have polynomial $x^4 - x^3 - 37x^2 + 46x + 20$ and $K_2$ have polynomial $x^4 - 2x^3 - 41x^2 - 49x + 95$.  The quadratic form associated to $K_1$ has coefficients
\[
\begin{pmatrix}10684425 & 71591860 & 106295143 \\ * & 119926773 & 356119635 \\ * &  * & 264372149 \end{pmatrix}
\]
and the quadratic form associated to $K_2$ has coefficients
\[
\begin{pmatrix}153730865 & 332210617 & 357281618 \\ * & 179475823 & 386040717 \\  * &  * & 207587063 \end{pmatrix}.
\]
If we proceed to compute the $\theta$-series of the two quadratic forms we see that they both represent 43 and 43 is the smallest positive integer they both represent.

We had hoped that we could attempt to prove that theta series corresponding to fields of the same discriminant were linearly independent by showing that such forms had different positive minima:
\begin{align*}
\theta_{K_1}(q) &= 1 + 2q^{43} + 2q^{172} + O(q^{200})\\
\theta_{K_2}(q) &= 1 + 2q^{43} + 2q^{170} + 2q^{172} + 2q^{187} + O(q^{200})
\end{align*}
As this examples shows, this approach cannot yield a proof.

\subsection{Heuristic evidence}

In order to have a chance for the set of $\theta$-series to be linearly, independent, we need that the number of quartic fields $K$ of discriminant $d_K$ that satisfy our conditions be less than the dimension of the corresponding space of modular forms of weight $\tfrac32$. To that end we present an informal argument for why there are few $\theta$-series of quartic number fields relative to the dimension of the corresponding space of weight $\tfrac32$ modular forms.

\subsubsection*{Bounds for dimensions of spaces of weight $\tfrac32$ modular forms}  We want to calculate
\[
\dim M_{\tfrac32}(\Gamma_0(8d_K),\kro{2d_K}{\cdot}).
\]
There are formulas in \cite{cohen-oesterle} that give the dimension the space of forms that are genuinely of level $8d_K$; we denote this space by $\dim M_{\tfrac32}^{\textrm{new}}(\Gamma_0(8d_K),\kro{2d_K}{\cdot})$.  It suffices for our purposes to make the following crude estimates:
\begin{align*}
\dim M_{\tfrac32}(\Gamma_0(8d_K),\kro{2d_K}{\cdot}) &> \dim M_{\tfrac32}^{\textrm{new}}(\Gamma_0(8d_K),\kro{2d_K}{\cdot})\\
&> \dim S_{\tfrac32}^{\textrm{new}}(\Gamma_0(8d_K),\kro{2d_K}{\cdot}\\
& > \dim S_{\tfrac32}^{+,\textrm{new}}(\Gamma_0(8d_K),\kro{2d_K}{\cdot})\\
& = \dim S_2^{\textrm{new}}(\Gamma_0(2d_K),\kro{2d_K}{\cdot})
\end{align*}
Also in \cite{cohen-oesterle} one can find formulas for $\dim S_2^{\textrm{new}}(\Gamma_0(2d_K),\kro{2d_K}{\cdot})$ and as these suffice and are simpler, we use them to get a lower bound on the dimension of the space we are interested in.

In particular, letting $N=2d_{K}$, for $k=2$ we get
\begin{multline*}
\dim S_2^\textrm{new}(\Gamma_0(2d_K),\kro{2d_K}{\cdot}) - \dim M_0^\textrm{new}(\Gamma_0(2d_K),\kro{2d_K}{\cdot})=\\
\frac{N}{12}\prod_{p\mid N} (1+\tfrac1p) - \tfrac12 \prod_{p\mid N} \lambda(r_p,s_p,p)+ \varepsilon_k\sum_{\substack{x\pmod{N}\\x^2+1\equiv 0 \pmod{N}}} \kro{2d_K}{x} +\\ \mu_k \sum_{\substack{x\pmod{N}\\x^2+x+1\equiv 0 \pmod{N}}} \kro{2d_K}{x}.
\end{multline*}
In our case, since $k=2$, we know that $\mu_k=0$ and we also know that $\dim M_0^\textrm{new}(\Gamma_0(2d_K),\kro{2d_K}{\cdot})=1$.  We consider each summand on its own.

If $p\mid 2d_K$ then we define $r_p$ to be the exponent of $p$ in the prime factorization of $2d_k$ and we define $s_p$ to be the exponent of $p$ in the prime factorization of the conductor of $\kro{2d_K}{\cdot}$ (since the character is quadratic, we know that its conductor is $2d_K$.  Recall that since $d_K$ is odd, we know $2d_K$ is square-free and observe that $r_p=s_p=1$.  Now
\[
\lambda(r_p,s_p,p)=\begin{cases} p^{r^\prime}+p^{r^\prime-1} & \text{ if } 2s_p \leq r_p = 2 r^\prime\\
2p^{r^\prime} & \text{ if } 2s_p \leq r_p = 2r^\prime + 1\\
2p^{r_p-s_p} & \text{ if } 2s_p > r_p.
\end{cases}
\]
We are in the third case and so the second term becomes $\tfrac12 \prod_{p\mid 2d_K} 2$ which is clearly less than $\sqrt{2d_K}$.  Now we consider the next term.  The worst case is that all the primes that divide $d_K$ are equivalent to 1 modulo 4.  Since we want a crude upper bound we assume that $\kro{2d_K}{x} =1$ and so we see that the third term is also less than $\sqrt{2d_K}$.  Thus we conclude that $\dim M_{\tfrac32}(\Gamma_0(8d_K),\kro{2d_K}{\cdot}) > O(d_{K})$. As we see next, this is far greater than the number of $\theta$-series we construct. 

\subsubsection*{The number of totally real quartic number fields}

Let $d$ be a positive square free discriminant and let $\mathcal{Q}_{d}$  be the set of isomorphism classes of totally real quartic fields of discriminant $d$. Let $\mathcal{F}_{d}:= \{ \theta_K: K \in \mathcal{Q}_{d} \}$.   Now we estimate an upper bound of order $d^{0.62}$ for the number of elements in $\mathcal{F}_{d}$. In particular,  $\#\mathcal{F}_{d}$  is much smaller than $O(d)$ which is a lower bound for the dimension of the space of modular forms in which the set $\mathcal{F}_{d}$ lives. Based on this, the computational evidence presented at the beginning of this section, and the analogous result for cubic fields (see \cite{GMS1} ) we expect that the set  $\mathcal{F}_{d}$ is linearly independent. More precisely:

\begin{conjecture}
Let $d$ be a positive square free discriminant and let $\mathcal{Q}_{d}$  be the set of isomorphism classes of totally real quartic fields of discriminant $d$. Let $\mathcal{F}_{d}:= \{ \theta_K: K \in \mathcal{Q}_{d} \}$ where $\theta_{K}$ is the form defined in Theorem \ref{LaDefi}. Then, the set $\mathcal{F}_{d}$ is lineally independent.
\end{conjecture}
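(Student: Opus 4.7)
The strategy I would pursue is to attack the conjecture by stratifying the trace forms $\Phi_K = \frac{1}{2}\Tr_{K/\Q}$ on $\mathcal{O}_K^0$ according to their genus and then combining the Eichler--Brandt correspondence between positive definite ternary quadratic forms and right ideals in quaternion orders with Siegel's mass formula. The dimension heuristic in \S \ref{Heuristics} makes linear independence plausible, and Theorem \ref{MainTheorem} already establishes the injectivity of $K \mapsto [\Phi_K]$, so the task is to promote injectivity on isometry classes to linear independence of the associated theta series.

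First, I would partition $\mathcal{Q}_d = \bigsqcup_i \mathcal{Q}_d^{(i)}$ according to the genus of $\Phi_K$. At primes $p \nmid 2d$ the local form $\Phi_K \otimes \mathbb{Z}_p$ is unimodular of rank $3$ and hence determined up to equivalence by its discriminant; at primes $p \mid d$ the local structure is governed by the decomposition of $\mathcal{O}_K \otimes \mathbb{Z}_p$ as a product of local rings, which depends only on the ramification type of $p$ in $K$. A finite case analysis of these splitting types for totally real quartic $K$ with fundamental discriminant $d$ should give an explicit (and small) list of genera arising among the $\Phi_K$.

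Within a single genus $\mathcal{G}_i$, I would appeal to Siegel's mass formula, which identifies the weighted sum $\sum_{[\Phi]\in\mathcal{G}_i} |O(\Phi)|^{-1}\theta_\Phi$ with a specific Eisenstein series, together with the Eichler--Brandt correspondence that identifies isometry classes of positive definite ternary forms of fixed discriminant with right ideal classes in a suitable order of a rational quaternion algebra obtained from the Clifford construction applied to $\Phi_K$. Classical results on Brandt matrices then imply that the theta series of distinct ideal classes in a single order span a Hecke-stable subspace on which the Brandt operators have sufficiently separated spectrum to force linear independence; via the Shimura correspondence this subspace maps into a space of weight $2$ modular forms on $\Gamma_0(2d)$ where newform multiplicity-one can be invoked. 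Combined with Theorem \ref{MainTheorem}, this would yield linear independence within each stratum $\mathcal{Q}_d^{(i)}$. For the cross-stratum step, I would compare Eisenstein components: since the local density factors at $p \mid d$ are different for different genera, the attached Eisenstein series are linearly independent, forcing independence of the full theta series after subtracting cuspidal parts.

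The principal obstacle, and the reason this plan is not a routine exercise, lies in the quaternionic identification. One must prove that the Clifford algebra of $\Phi_K$ produces an order of the expected level and that non-isomorphic $K, L \in \mathcal{Q}_d$ yield ideal classes that are genuinely distinct in the Brandt groupoid. This is a delicate local-global matching problem: the elementary approaches of \S \ref{sec:sameprime}, based on representing the same small prime or sharing a minimum, have already been shown to fail, so any proof must route through the Hecke-theoretic structure of the ambient space. I expect the decisive technical step to be a non-vanishing statement for differences of Brandt matrix eigenvalues on the span attached to $\mathcal{Q}_d$, and it is precisely this non-vanishing — uniform in $d$ — that I do not currently see how to prove, which is consistent with the authors' inability to establish the conjecture unconditionally.
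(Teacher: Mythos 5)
This statement is a \emph{conjecture} in the paper: the authors explicitly state that they cannot prove linear independence for $n=4$, and what they supply in \S\ref{Heuristics} is only (a) computational verification over the LMFDB and Kl\"uners--Malle databases, (b) a demonstration that the two natural elementary strategies (distinct represented primes, distinct minima) fail, and (c) a counting heuristic showing $\#\mathcal{F}_d \ll d^{0.62}$ against a space of dimension $\gg d$. So there is no proof in the paper to compare yours against, and your text, by your own admission in the final paragraph, is not a proof either but a proposed research program. It therefore cannot be accepted as a proof of the conjecture.

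Beyond the admitted gap (the ``non-vanishing for differences of Brandt matrix eigenvalues, uniform in $d$''), there is a concrete flaw in the within-genus step. You assert as a ``classical result'' that the theta series of distinct ideal classes in a definite quaternion order, equivalently of the distinct classes in a genus of positive definite ternary forms, are linearly independent because the Brandt operators have separated spectrum. This is not a theorem, and it is false in general: nontrivial linear relations among the theta series of the classes of a single genus of ternary forms are known to occur (the span of the class theta series can have dimension strictly smaller than the class number; multiplicity one for the underlying weight $2$ newforms does not prevent this, since several classes can project to the same Hecke eigencomponents with proportional coefficients, and the Shimura lift is not injective on the relevant span). Schiemann's theorem, which the paper does use in the proof of Theorem \ref{MainTheorem}, only gives that non-isometric ternary forms have \emph{distinct} theta series; distinctness of three or more functions does not give linear independence. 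So even granting the genus stratification and the cross-genus Eisenstein argument, the core step of your plan would need a new input special to trace forms of quartic fields --- precisely the kind of input the authors report they were unable to find. As it stands, your proposal should be read as a plausible (and not unreasonable) strategy sketch, not as a proof, and the statement remains conjectural.
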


For quartic fields of fundamental discriminant there is a classical duality between quartic fields and $2$-torsion elements of their cubic resolvents. More precisely, if $K$ is the cubic resolvent field of $L$ then the field $L$ corresponds to an index $2$ subgroup of $Cl(K)$. In a similar fashion, the field $K$ corresponds to an index $3$ subgroup of its quadratic resolvent. Here we recall briefly how such correspondences occur; see \cite{BhargavaQuartics} and \cite[\S 3.1]{bhargava}. Let $L$ be a quartic field of fundamental discriminant $d$. By a result of \cite{Kondo} if $\widetilde{L}$ is the Galois closure of $L$ then 
$S_{4} \cong \gal(\widetilde{L}/\Q)$. Hence, up to conjugation, $\widetilde{L}$ contains a unique cubic field $K$, called the resolvent of $L$. The Galois group  $\gal(\widetilde{L}/\Q)$ is isomorphic to the dihedral group of order $8$; let $E$ be the quadratic extension of $K$ that corresponds, via Galois, to the unique cyclic subgroup of order $4$. As it turns out, the extension $E/K$ is unramified and thus, by class field theory, this corresponds to a index $2$ subgroup of $Cl(K)$. Since $d$ is square free this is also the discriminant of $K$, hence if  $\widetilde{K}$ denotes the Galois closure of $K$ then $S_{3} \cong \gal(\widetilde{K}/\Q)$. Furthermore, the unique quadratic field inside $\widetilde{K}$, the quadratic resolvent of $K$ is the quadratic field of discriminant $d$ i.e., $\Q(\sqrt{d})$. The Galois extension  $\widetilde{K}/\Q(\sqrt{d})$ is unramified hence it  corresponds to an index $3$ subgroup of $Cl(\Q(\sqrt{d}))$.

\[
\begin{tikzpicture}[scale=1.3]
  \node at (3.25,0) (1) {$\mathbb{Q}$};
  \node at (1.2,2.59) (2) {$L$};
  \node at (4.5,1.89) (3) {$K$};
  \node at (6,0.89) (4) {$\mathbb{Q}(\sqrt{d})$};
  \node at (3.95,3.89) (5) {$E$};
  \node at (6.38,3.89) (6) {$\widetilde{K}$};
  \node at (3.25,5.84) (7) {$\widetilde{L}$};
  \draw[-] (1)edge node[auto] {$S_{4}$} (7) (1) edge node[auto] {$4$}(2) (1) edge node[auto] {$3$}(3) (1) edge node[below right] {$\Z/2\Z$} (4) 
     (2)edge node[auto] {$S_{3}$} (7) (3) edge node[auto] {$\Z/2\Z$} (5) (3) edge node[auto] {$\Z/2\Z$} (6)  (4) edge node[right] {$\Z/3\Z$} (6) (5) edge node[right] {$\Z/4\Z$} (7) (6)edge node[right] {\ \ $\Z/2\Z \times \Z/2\Z $} (7) ;
 
\end{tikzpicture}
\]

The above analysis is useful in bounding the number of quartic fields of fundamental discriminant equal to $d$. Given a finite abelian group $G$ and a prime $p$ it is not hard to see that the number of index $p$ subgroups of $G$ is given by \[\frac{p^{{\rm rank}_{p}(G)}-1}{p-1}\] where ${\rm rank_{p}}(G)=\dim_{\F_{p}}(G \otimes \F_{p}) $

\begin{proposition}

Let $d$ be a fundamental discriminant and let $n_{4}(d)$ be the number of isomorphism classes of quartic fields with discriminant $d$. Then, $n_{4}(d) \ll O(d^{0.62}).$

\end{proposition}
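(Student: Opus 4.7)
The plan is to combine the Galois-theoretic duality sketched just before the statement with the best available pointwise bounds on $\ell$-torsion in class groups of number fields of small degree.

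First I would translate the counting problem into a class-group problem. By that duality, every quartic field $L$ of fundamental discriminant $d$ with $\mathrm{Gal}(\widetilde{L}/\Q)\cong S_4$ is determined by a pair $(K,H)$, where $K$ is its cubic resolvent (a cubic field with $d_K=d$) and $H$ is the index-$2$ subgroup of $Cl(K)$ corresponding via class field theory to the unramified quadratic extension $E/K$ sitting inside $\widetilde{L}$. The cubic $K$ itself corresponds to an index-$3$ subgroup of $Cl(\Q(\sqrt d))$. Applying the formula $(p^{\mathrm{rank}_p(G)}-1)/(p-1)$ for the number of index-$p$ subgroups of a finite abelian group, once at $p=3$ for $Cl(\Q(\sqrt d))$ and once at $p=2$ for $Cl(K)$, gives
\begin{align*}
n_4(d) \;\leq\; \sum_{\substack{K\text{ cubic}\\ d_K=d}} \bigl(2^{r_2(K)}-1\bigr) \;\leq\; \frac{3^{r_3}-1}{2}\cdot\max_{K}\bigl(2^{r_2(K)}-1\bigr),
\end{align*}
where $r_3:=\dim_{\F_3}Cl(\Q(\sqrt d))[3]$ and $r_2(K):=\dim_{\F_2}Cl(K)[2]$.

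Second I would feed in pointwise bounds on $\ell$-torsion. By the theorem of Ellenberg--Venkatesh (and by now several subsequent refinements for the specific pairs $(\ell,n)=(3,2)$ and $(\ell,n)=(2,3)$), there exist exponents $\alpha,\beta$ and estimates
\begin{align*}
|Cl(\Q(\sqrt d))[3]| \;\ll_\varepsilon\; d^{\alpha+\varepsilon}, \qquad |Cl(K)[2]| \;\ll_\varepsilon\; d^{\beta+\varepsilon}
\end{align*}
holding uniformly in $d$ (in the second case, uniformly over cubic $K$ with $d_K=d$). Plugging these into the inequality above yields $n_4(d)\ll_\varepsilon d^{\alpha+\beta+\varepsilon}$, and the announced $d^{0.62}$ will follow once the sharpest currently available values of $\alpha$ and $\beta$ are substituted.

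The main obstacle is assembling the right pair of bounds: both exponents have to be controlled \emph{simultaneously} and \emph{uniformly} in $d$, and one has to check that the hypothesis that $d$ is a squarefree fundamental discriminant coprime to $n$ is compatible with the setting in which the $\ell$-torsion estimates are stated. A secondary technical point is that the class-field-theoretic correspondence a priori parametrizes ordered pairs $(K,H)$ rather than isomorphism classes of the quartic $L$, so a small combinatorial adjustment, quotienting by the action of $\mathrm{Gal}(\widetilde K/\Q)$ on the choice of resolvent embedding, is needed to pass from one count to the other; this only changes the final constant, not the exponent.
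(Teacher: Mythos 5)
Your proposal is correct and follows essentially the same route as the paper: reduce to counting index-$2$ subgroups of $Cl(K)$ over the cubic resolvents $K$ of discriminant $d$, count those cubics via index-$3$ subgroups of $Cl(\Q(\sqrt d))$, and insert pointwise $\ell$-torsion bounds. The paper closes the one numerical step you left open by taking $|Cl(K)[2]|\ll d^{0.28}$ (Bhargava et al.) and $|Cl(\Q(\sqrt d))[3]|\ll d^{1/3}$ (Ellenberg--Venkatesh), so that $\alpha+\beta=0.28+1/3<0.62$.
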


\begin{proof}

Let $T_{d}$ be the number of cubic fields, up to isomorphism,  of discriminant equal to $d$ and let $\{K_{1},...,K_{T_{d}}\}$ be a set of representatives of such fields. The above analysis yields 
\[
n_{4}(d)=\sum_{i=1}^{T_{d}} \left(2^{{\rm rank}_{2}(Cl(K_{i}))}-1\right) < \sum_{i=1}^{T_{d}} \left(2^{{\rm rank}_{2}(Cl(K_{i}))}\right) = \sum_{i=1}^{T_{d}} Cl(K_{i})[2] \ll d^{0.28}T_{d}
\] 
where the last inequality is due to \cite{BhargavaFrankAndAll}. By the analysis on cubics above \[T_{d}=\frac{3^{{\rm rank}_{3}(Cl(\Q(\sqrt{d})))}-1}{2} = (Cl(\Q(\sqrt{d})[3]-1)/2 \ll d^{1/3}\] where the last inequality is due to \cite{EllenVenka}. The result follows since $0.28 + 1/3 < 0.62.$

\end{proof}

As an immediate corollary of Theorem \ref{MainTheorem} we have:

\begin{corollary}

Keeping with the notation of this section we have that $\#\mathcal{F}_{d} \ll O(d^{0.62}).$
\end{corollary}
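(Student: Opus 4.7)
The plan is to deduce the corollary directly by combining the injectivity statement of Theorem \ref{MainTheorem} with the counting bound just proved for $n_4(d)$. By construction, the assignment $K \mapsto \theta_K$ gives a surjection $\mathcal{Q}_d \twoheadrightarrow \mathcal{F}_d$, so it suffices to show that this map is also injective and then bound $\#\mathcal{Q}_d$ from above by $n_4(d)$.

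First I would verify that Theorem \ref{MainTheorem} applies in the present setting with $n=4$. The hypothesis of that theorem requires $d$ to be a fundamental discriminant with $\gcd(n, d) = 1$; under our running assumption that $d$ is a positive square-free discriminant (and implicitly odd, which we may assume since the counting bound is asymptotic and finitely many even discriminants are absorbed into the $\ll$-constant), both conditions are met. The theorem then asserts that $\theta_K = \theta_L$ forces $K \cong L$ for $K, L \in \mathcal{Q}_d$, so the map $K \mapsto \theta_K$ is a bijection from $\mathcal{Q}_d$ onto $\mathcal{F}_d$. In particular $\#\mathcal{F}_d = \#\mathcal{Q}_d$.

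Second, I would combine this equality with the trivial inclusion $\mathcal{Q}_d \hookrightarrow \{\text{isomorphism classes of quartic fields of discriminant } d\}$, which yields $\#\mathcal{Q}_d \leq n_4(d)$. Plugging in the Proposition just established gives
\[
\#\mathcal{F}_d \;=\; \#\mathcal{Q}_d \;\leq\; n_4(d) \;\ll\; d^{0.62},
\]
which is exactly the claim.

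There is no real obstacle here beyond checking that the hypotheses of Theorem \ref{MainTheorem} line up with those of the counting proposition; the content of the corollary is entirely carried by these two prior results. The only subtle point worth flagging is the coprimality condition $\gcd(4, d) = 1$, which is why it is natural to restrict attention to odd square-free $d$ when stating the asymptotic.
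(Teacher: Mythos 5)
Your proof is correct and matches the paper's intent exactly: the corollary is presented there as an immediate consequence of Theorem \ref{MainTheorem} combined with the bound $n_4(d) \ll d^{0.62}$ from the preceding proposition, which is precisely your argument. (One could even note that the upper bound only needs the surjection $\mathcal{Q}_d \twoheadrightarrow \mathcal{F}_d$, so injectivity is not strictly required, but invoking the theorem as you do is what the paper has in mind.)
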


\subsection{Higher degree fields}

For all the totally real number fields of degrees 5, 6, 7 in the Kl\"uners--Malle database \cite{malle}, the set of associated theta series is linearly independent (in degree 5 there were 5870 fields where the number of associated theta series was greater than 1; in degree 6 there were 236 fields where the number of associated theta series was greater than 1; in degree 7 there were 16 fields where the associated number of associated theta series was greater than 1).

\bibliographystyle{plain}
\bibliography{oaxaca}

%
%
%
%
%
%
%
%
%
%
%

\end{document}